\documentclass[11pt]{article}
\usepackage{amsthm,amsfonts, amsbsy, amssymb,amsmath,graphicx}
\usepackage{graphics}
\usepackage[english]{babel}
\usepackage{graphicx}
\usepackage{lineno}
\usepackage{enumerate}
\usepackage{hyperref}
\usepackage{color}

\hypersetup{colorlinks=true}

\hypersetup{colorlinks=true, linkcolor=blue, citecolor=blue,urlcolor=blue}

\input epsf
\addtolength{\hoffset}{-1.7cm}
\addtolength{\textwidth}{3cm}
\addtolength{\voffset}{-1.7cm} \addtolength{\textheight}{2cm}

\usepackage{epic,latexsym,amssymb}
\usepackage{tikz}

\usepackage{tikz,epsf}

\usepackage{amsfonts}
\usepackage{amscd}
\usepackage{amsmath}
\usepackage{graphicx}
\usepackage{color}
\usepackage{caption,subcaption}

\newtheorem{theorem}{Theorem}

\newtheorem{proposition}[theorem]{Proposition}
\newtheorem{lemma}[theorem]{Lemma}

\newtheorem{problem}{Problem}

\newtheorem{corollary}[theorem]{Corollary}

\newtheorem{remark}[theorem]{Remark}

\newcommand{\diam}{{\rm diam}}

\newcommand{\pow}[1]{^{[\natural #1]}}

\newcommand{\opack}{\rho^{\rm o}}

\newcommand{\cart}{\, \Box \,}

\tikzstyle{vertex}=[circle, draw, inner sep=0pt, minimum size=6pt]

\newcommand{\QEDmark}{\mbox{\textsc{qed}}}
\newcommand{\proofStarter}[1]{\textsc{#1}}

\begin{document}

\title{Injective coloring of graphs revisited\vspace{7mm}}
\date{}
\author {
Bo\v{s}tjan Bre\v{s}ar$^{a,b}$, Babak Samadi$^c$ and Ismael G. Yero$^d$\vspace{1.5mm}\\
$^a$Faculty of Natural Sciences and Mathematics, University of Maribor, Slovenia\\
$^b$Institute of Mathematics, Physics and Mechanics, Ljubljana, Slovenia\\
{\tt bostjan.bresar@um.si}\vspace{1.5mm}\\
$^c$Department of Mathematics, Faculty of Mathematical Sciences, Alzahra University,\\ Tehran, Iran\\
{\tt b.samadi@alzahra.ac.ir}\vspace{1.5mm}\\
$^d$Departamento de Matem\'{a}ticas, Universidad de C\'{a}diz, Algeciras, Spain\\
{\tt ismael.gonzalez@uca.es}\vspace{3mm}\\
}
\date{}
\maketitle

\begin{abstract}
An open packing in a graph $G$ is a set $S$ of vertices in $G$ such that no two vertices in $S$ have a common neighbor in $G$. The injective chromatic number $\chi_i(G)$ of $G$ is the smallest number of colors assigned to vertices of $G$ such that each color class is an open packing. Alternatively, the injective chromatic number of $G$ is the chromatic number of the two-step graph of $G$, which is the graph with the same vertex set as $G$ in which two vertices are adjacent if they have a common neighbor. The concept of injective coloring has been studied by many authors, while in the present paper we approach it from two novel perspectives, related to open packings and the two-step graph operation.  We prove several general bounds on the injective chromatic number expressed in terms of the open packing number. In particular, we prove that
$\chi_{i}(G)\geq \frac{1}{2}+\sqrt{\frac{1}{4}+\frac{2m-n}{\opack}}$ holds
for any connected graph $G$ of order $n\geq2$, size $m$, and the open packing number $\opack$, and characterize the class of graphs attaining the bound. Regarding the well-known bound $\chi_i(G)\ge \Delta(G)$, we describe the family of extremal graphs and prove that deciding when the equality holds (even for regular graphs) is NP-complete, solving an open problem from an earlier paper. Next, we consider the chromatic number of the two-step graph of a graph, and compare it with the clique number and the maximum degree of the graph. We present two large families of graphs in which $\chi_i(G)$ equals the cardinality of a largest clique of the two-step graph of $G$. Finally, we consider classes of graphs that admit an injective coloring in which all color classes are maximal open packings. We give characterizations of three subclasses of these graphs among graphs with diameter $2$, and find a partial characterization of hypercubes with this property.
\end{abstract}

\textbf{Keywords}: two-step graph of a graph, injective coloring, graph product, open packing \vspace{1mm}

\textbf{2010 Mathematical Subject Classification:} 05C15, 05C69, 05C76.


\section{Introduction and preliminaries}

Throughout the paper, we consider $G$ as a finite simple graph with vertex set $V(G)$ and edge set $E(G)$.
Recall that a \emph{(vertex) coloring} of $G$ is a labeling of the vertices of $G$ so that any two adjacent vertices have distinct labels. The \emph{chromatic number} of $G$, denoted $\chi(G)$, is the smallest number of labels in a coloring of $G$. For some additional information on coloring problems, we refer the reader to~\cite{JT}.

A function $f:V(G)\rightarrow\{1,\dots,k\}$ is an {\em injective $k$-coloring function} if no vertex $v$ is adjacent to two vertices $u$ and $w$ with $f(u)=f(w)$. For an injective $k$-coloring function $f$, the set of color clases $\big{\{}\{v\in V(G)\mid f(v)=i\}:\,1\leq i\leq k\big{\}}$ is an \emph{injective $k$-coloring} of $G$ (or simply an {\em injective coloring} if $k$ is clear from the context). The minimum $k$ for which a graph $G$ admits an injective $k$-coloring is the {\em injective chromatic number} of $G$, and is denoted by $\chi_{i}(G)$. Injective colorings were introduced in \cite{hkss}, and further studied in~\cite{bu-2009, cky-2010, lst-2009, pp, sy} for   just a few examples.

In this paper, we focus on two approaches to injective colorings of graphs that have seemingly not been much explored. The following concept was mentioned also in the seminal paper of Hahn, Kratochv\'{i}l, \v{S}ir\'{a}\v{n} and Sotteau~\cite{hkss}.  For a given graph $G$, the {\em two-step graph} $\mathcal{N}(G)$ of $G$ is the graph having the same vertex set as $G$ with an edge joining two vertices in $\mathcal{N}(G)$ if and only if they have a common neighbor in $G$. These graphs were introduced in~\cite{av} and investigated later in~\cite{bd,eh,lr}, while in~\cite{hkss} the concept was referred to as the common neighbor graph of a graph $G$.  Taking into account the fact that a vertex subset $S$ is independent in $\mathcal{N}(G)$ if and only if every two vertices of $S$ have no common neighbor in $G$, we can readily observe that
\begin{equation}\label{open-previous}
\chi_{i}(G)=\chi\big{(}\mathcal{N}(G)\big{)}.
\end{equation}

The second approach to injective colorings is motivated by a concept that is close to graph domination studies. Namely, a subset $B\subseteq V(G)$ is an {\em open packing} 
in a graph $G$ if for any two distinct vertices $u,v\in B$, we have $N_{G}(u)\cap N_{G}(v)=\emptyset$. The {\em open packing number} of $G$, denoted by $\opack(G)$, is the maximum cardinality among all open packings in $G$. See~\cite{hs}, where the concept of open packing was introduced, and~\cite{Rall}, where it was efficiently used in the study of total domination in direct products of graphs. It is easy to see that every color class of an injective coloring of a graph $G$ is an open packing in $G$, and consequently, the injective chromatic number is the minimum number of classes in a partition of $G$ into open packings. We give more details about this connection in the next subsection, where we also present several other well studied concepts that are related to injective colorings of graphs.

\subsection{Related concepts}

In this subsection, we thus present some strong relationships between the concepts mentioned above and other ones related to some classical topics of colorings and domination in graphs.

We start with distance coloring of graphs, which was initiated by Kramer and Kramer~ \cite{kk2, kk1} in $1969$. A {\em $2$-distance coloring} of a graph $G$ is a mapping of $V(G)$ to a set of colors such that any two vertices at distance at most two receive different colors (that is, every vertices that are adjacent or have a common neighbor are colored differently). The minimum number of colors $k$ for which there is a $2$-distance coloring of $G$ is the {\em $2$-distance chromatic number}, $\chi_{2}(G)$, of $G$. Clearly, a $2$-distance coloring generates a partition of $V(G)$ into sets of vertices having the same color.

The {\em open neighborhood} of a vertex $v$ is denoted by $N_{G}(v)$, and its {\em closed neighborhood} is $N_{G}[v]=N_{G}(v)\cup \{v\}$. The {\em degree} of a vertex $u$ of $G$ is $\deg_{G}(u)=|N_{G}(v)|$, while the {\em minimum} and {\em maximum degrees} of $G$ are denoted by $\delta(G)$ and $\Delta(G)$, respectively. Also, a subset $S\subseteq V(G)$ is a {\em dominating set} in $G$ if each vertex in $V(G)\backslash S$ has at least one neighbor in $S$. The {\em domination number} $\gamma(G)$ is the minimum cardinality among all dominating sets in $G$. If $G$ has no isolated vertices, then a subset $S\subseteq V(G)$ is a {\em total dominating set} of $G$ if each vertex in $V(G)$ has a neighbor in $S$, and the smallest cardinality of a total dominating set in $G$ is the {\em total domination number}, $\gamma_t(G)$, of $G$. It is well-known that $\opack(G)\leq \gamma_{t}(G)$ for every graph $G$ with no isolated vertices; see~\cite{Rall}.

For more information on domination theory, the reader can consult \cite{hhh,hhs}. Similarly as coloring can be regarded as the partition into independent sets, in domination theory a partition into dominating sets of different types has also been widely considered.
The study of the corresponding parameter, the {\em domatic number} $d(G)$ of a graph $G$, was initiated by Cockayne and Hedetniemi~\cite{ch}. A closely related topic to domination is that of $2$-packing, and thus a graph partitioning problems extended to $2$-packings promises an interesting parameter.

In this concern, a subset $B\subseteq V(G)$ is a {\em $2$-packing} (or simply a {\em packing}) in $G$ if for every pair of distinct vertices $u,v\in B$, we have $N_{G}[u]\cap N_{G}[v]=\emptyset$.
The {\em packing number} $\rho(G)$ is the maximum cardinality among all packings in $G$. As announced above, one may be interested in a partition of the vertex set of a graph  $G$ into $2$-packings. A partition $\mathbb{P}=\{P_{1},\dots,P_{|\mathbb{P}|}\}$ of $V(G)$ is a {\em packing partition} 
if $P_{i}$ is a packing in $G$ for each $i$, $1\leq i\leq|\mathbb{P}|$. The {\em packing partition number} $p(G)$ is the minimum cardinality of a packing partition of $G$.

In contrast with the construction of the two-step graph $\mathcal{N}(G)$ of a graph $G$ mentioned earlier, we recall the following concept. Given a graph $G$, the {\em closed neighborhood graph}, $\mathcal{N}_{c}(G)$, of $G$, has vertex set $V(G)$, and two distinct vertices $u$ and $v$ are adjacent in $\mathcal{N}_{c}(G)$ if and only if $N_{G}[u]\cap N_{G}[v]\neq \emptyset$. See~\cite{bkr} for a recent use of this concept, and note that $\mathcal{N}_{c}(G)$ is also well known under the name the {\em square}, $G^{2}$, of $G$.
With this construction in mind, we observe that a $2$-distance coloring of a graph $G$ is essentially the same as a coloring of its closed neighborhood graph. Furthermore, it is easily seen that the $2$-distance coloring problem is equivalent to the problem of partitioning the vertex set of a graph into packings. Altogether,  we observe the following equalities:
\begin{equation}\label{closed}
\chi_{2}(G)=\chi(G^{2})=\chi\big{(}\mathcal{N}_{c}(G)\big{)}=p(G).
\end{equation}


Motivated by the existence of packing partitions and open packings, we can say that a partition $\mathbb{P}=\{P_{1},\dots,P_{|\mathbb{P}|}\}$ of the vertex set of a graph $G$ is an {\em open packing partition} 
if $P_{i}$ is an open packing in $G$ for each $1\leq i\leq|\mathbb{P}|$. The {\em open packing partition number} $p_{o}(G)$ is the minimum cardinality among all open packing partitions of $G$. 
It turns out that such partitions can be understood from other perspectives, since we can readily see that for any $k$-injective coloring function $f$, the set of color classes $\big{\{}\{v\in V(G)\mid f(v)=i\}:\,1\leq i\leq k\big{\}}$ forms an open packing partition of $G$ and vice versa. This fact, together with~\eqref{open-previous} and the definition of two-step graphs, leads to
\begin{equation}\label{open}
\chi_{i}(G)=\chi\big{(}\mathcal{N}(G)\big{)}=p_{o}(G),
\end{equation}
which is an open neighborhood analogue of (\ref{closed}). This establishes another relationship between coloring and domination theories in graphs.

Another close relative of two-step graphs and injective colorings are exact distance-$2$ graphs and their colorings. Given a graph $G$, the {\em exact distance-$p$ graph} $G^{[\natural p]}$ has $V(G)$ as its vertex set, and two vertices are adjacent whenever the distance between them in $G$ equals $p$. The concept  was introduced back in 1983 by Simi\'{c}~\cite{Sim83} who investigated when the exact distance-$p$ graphs coincide with line graphs.  A different kind of approach to exact distance graphs was given in a series of papers~\cite{DVORAK, PAYAN, PJWAN, ZIEGLER} where the so-called cube-like graphs were considered, which arise as exact distance graphs of hypercubes. Only a decade ago, the concept was rediscovered by Ne\v set\v ril and Ossona de Mendez, which initiated further studies. The main object of many of the mentioned investigation is the chromatic number of exact distance graphs. In particular, a problem from~\cite{NESET2}, attributed to van den Heuvel and Naserasr,  was asking about the boundedness of the chromatic number of $G\pow{p}$ when $G$ is a planar graph and $p$ is an odd integer, while in~\cite{BOUSQUET} the question was answered in the negative by using a class of trees. In some recent papers, exact distance-$p$ colorings were connected with generalized colorings~\cite{HEUVEL}, they were studied in subcubic planar graphs~\cite{FOUCAUD}, and in various graph products~\cite{bgkt}. It was noted in~\cite{FOUCAUD} that for any graph $G$,
$$\chi(G^{[\natural 2]})\le \chi_i(G)\le \chi_{2}(G),$$
and if $G$ is triangle-free, then $\chi(G^{[\natural 2]})= \chi_i(G)$. Actually, this is a direct consequence of the fact that the two-step graph $\mathcal{N}(G)$ of a triangle-free graph $G$ coincides with $G^{[\natural 2]}$. Moreover, observe that $G^{[\natural 2]}$ is always a spanning subgraph of $\mathcal{N}(G)$, where $G^{[\natural 2]}$ can be obtained from $\mathcal{N}(G)$ by removing the edges that belong to a triangle in $G$.

\subsection{Other terminologies, notations and plan of the article}

Given subsets $A,B\subseteq V(G)$, let $[A,B]$ denote the set of all edges with one end-vertex in $A$ and the other in $B$. Given a set $S\subseteq V(G)$, by $G[S]$ we denote the subgraph of $G$ induced by $S$. The maximum cardinality of a set $S$ in $G$ such that $G[S]$ is a complete graph is the {\em clique number} of $G$, denoted $\omega(G)$.
A graph $G$ is {\em chordal} if it contains  no induced cycle of length greater than $3$. It is well known that chordal graphs $G$ are perfect; that is, for any induced subgraph $H$ of $G$, we have $\omega(H)=\chi(H)$.  By a $\chi_{i}(G)$-coloring, a $\chi_{2}(G)$-coloring and a $\opack(G)$-set we mean an injective coloring, a $2$-distance coloring and an open packing of $G$ of cardinality $\chi_{i}(G)$, $\chi_{2}(G)$ and $\opack(G)$, respectively.

For the following two standard products of graphs $G$ and $H$ (see~\cite{ImKl}), the vertex set of the product is $V(G)\times V(H)$. In the edge set of the \emph{Cartesian product} $G\Box H$ two vertices are adjacent if they are adjacent in one coordinate and equal in the other. On the other hand, in the edge set of the \emph{direct product} $G\times H$ two vertices are adjacent if they are adjacent in both coordinates. 
Note that these two products are associative and commutative~\cite{ImKl}. We use the book of West~\cite{we} as a reference for graph theory terminology and notation which are not explicitly defined here.

In this paper, we continue the study of the injective chromatic number, or open packing partition number, or equivalently, the chromatic number of two-step graphs from different perspectives. The three terminologies will be used in concordance with the situation in which they appear.
In the following section we concentrate on some general bounds on the injective chromatic number of graphs. Noting the general lower bound $\chi_i(G)\ge \Delta(G)$, we prove that for any $r$-regular graph $G$ it is NP-complete to decide whether $\chi_i(G)=r$. In contrast to that, we give a structural characterization of graphs $G$ with $\chi_i(G)=\Delta(G)$, which has a particularly nice form in regular graphs, yet, it does not lead to an efficient algorithm. We also present some bounds on the injective chromatic number of graphs expressed in terms of their open packing number, and also a sharp lower bound expressed in terms of the order, size and the open packing number of a graph. In Section~\ref{Sect:final}, we study graphs $G$ with the property that $\chi_i(G)=\omega(\big{(}\mathcal{N}(G)\big{)}$. We prove that $\omega(\big{(}\mathcal{N}(G)\big{)}$ can exceed $\Delta(G)$ for an arbitrarily large amount. On the other hand, we present two large families of graphs $G$ with $\chi_i(G)=\omega(\big{(}\mathcal{N}(G)\big{)}$, notably, the graphs with no induced even cycles, and the graphs whose complements are bipartite. Finally, in Section~\ref{Sect:perfect}, we consider the graphs that admit injective colorings such that all color classes are maximal open packings; in this study, three classes naturally appear depending on how strict conditions we impose. We characterize all three classes of graphs among graphs with diameter $2$ and among even cycles, respectively, and give some initial related results in the class of hypercubes.




\section{General bounds on the injective chromatic number}\label{Sect:bounds}


Let $u\in V(G)$ be a vertex of maximum degree and let $\mathbb{B}=\{B_{1},\dots,B_{\chi_{i}(G)}\}$ be a $\chi_{i}(G)$-coloring. Since $B_{j}$ is an open packing in $G$ for each $1\leq j\leq \Delta(G)$, $u$ has at most one neighbor in $B_{j}$ and hence $\sum_{j=1}^{\chi_{i}(G)}|N(u)\cap B_{j}|\leq \chi_{i}(G)$. So,
\begin{equation}\label{EQ1}
\chi_{i}(G)\geq \Delta(G).
\end{equation}
This simple but important inequality will turn out to be useful in some places in this paper.

A complete characterization of graphs $G$ for which $\chi_{i}(G)=\Delta(G)$ was mentioned as an open problem by Panda and Priyamvada \cite{pp}. In this section, we discuss the complexity and structural aspects of this problem, leading to a complete solution to it. Let us mention that the decision version of the injective chromatic number was shown to be NP-complete even when restricted to (certain subclasses of) bipartite graphs~\cite{jin-2013}.

In what follows, we prove that it is NP-complete to decide whether $\chi_{i}(G)=\Delta(G)$ for a given graph $G$. Indeed, we give a stronger result by limiting on the case when $G$ is regular. For this purpose, we make use of the following well known result due to Leven and Galil \cite{lg} regarding the edge chromatic number $\chi'(G)$ of a graph $G$, which indeed represents the chromatic number of the line graph of $G$.

\begin{lemma}\emph{(\cite{lg})}\label{LG}
For any fixed $r$, the problem of deciding whether $\chi'(G)$ equals $r$ or $r+1$ for an $r$-regular graph $G$ is NP-complete.
\end{lemma}

\begin{theorem}\label{thm:regular}
Given an $r$-regular graph $G$ with $r\geq 3$, it is NP-complete to decide whether $\chi_{i}(G)=r$.
\end{theorem}
\begin{proof}
Let $G$ be an $r$-regular graph with $r\ge 3$. Based on \eqref{EQ1}, it is clear that the problem of deciding whether $\chi_{i}(G)=r$ is in NP. To see the NP-completeness of the problem, we establish a reduction from the problem given in Lemma~\ref{LG}. Let $G'$ be obtained from $G$ by replacing each $uv\in E(G)$ by a $4$-path $u(u,v)(v,u)v$ and creating an $r$-clique on the set of vertices $\{(v,w)\mid w\in N(v)\}$ for each $v\in V(G)$. Now let $G''=G'-V(G)$. (The structure of $G''$, by a different expression, was introduced in~\cite{ht}.) It is routine by the construction that $G''$ is an $r$-regular graph as well. Moreover, we observe that every vertex of $G''$ belongs to a clique of cardinality $r$.

Suppose that $\chi'(G)=r$ and that $f$ is an $r$-edge coloring of $G$. We define $f'$ on $V(G'')$ by $f'\big{(}(u,v)\big{)}=f'\big{(}(v,u)\big{)}=f(uv)$ for each edge $uv$ of $G$. Suppose to the contrary that there exists a vertex $(u,v)$ adjacent to distinct vertices $(u',v')$ and $(u'',v'')$ in $G''$ with $f'\big{(}(u',v')\big{)}=f'\big{(}(u'',v'')\big{)}$. In particular, we have $f(u'v')=f(u''v'')$ by the definition. Since $f$ is an $r$-edge coloring of $G$, it follows that the edges $u'v',u''v''\in E(G)$ have no shared endpoints. This implies, by the structure of $G''$, that there are two $r$-cliques $A$ and $B$ in $G''$ such that $(u',v')\in A$ and $(u'',v'')\in B$. Since each vertex in an $r$-clique $K$ has precisely one neighbor outside $K$, it follows that $(u,v)\in A\cup B$. We therefore assume, without loss of generality, that $(u,v)\in A$. With this in mind and taking the structure of $G''$ into account, we may assume that $u=u'$. This necessarily implies that $v\neq v'$. Therefore, $uv$ and $uv'$ are two distinct edges in $G$. On the other hand, because $(u,v)(u'',v'')\in E(G'')$, it follows from the structure of $G''$ that $(u'',v'')=(v,u)$. Therefore, the edges $u'v'=uv'$ and $u''v''=uv$ in $G$ have the vertex $u$ in common. This contradicts the fact that $f(u'v')=f'\big{(}(u',v')\big{)}=f'\big{(}(u'',v'')\big{)}=f(u''v'')$. Therefore, $f'$ is an injective coloring of $G''$ with $r$ colors. So, $\chi_{i}(G'')\leq r$. This results in the equality in view of the inequality (\ref{EQ1}).

Conversely, let $\chi_{i}(G'')=r$. Let $g$ be a $\chi_{i}(G'')$-coloring. Note that each vertex $x$ of $G$ turns into a unique $r$-clique $A_{x}$ in $G''$. Since $g$ is an injective coloring of $G''$ and because $r\geq3$, $g$ assigns $r$ distinct colors to the vertices in $A_{x}\subseteq V(G'')$ for each $x\in V(G)$. Notice that each vertex $(x,y)\in A_{x}$ necessarily has its $r$th neighbor $(y,x)\in A_{y}$, for some $y\in N_{G}(x)$, with the color $g\big{(}(x,y)\big{)}=g\big{(}(y,x)\big{)}$. We now define $h$ on $E(G)$ by $h(xy)=g\big{(}(x,y)\big{)}=g\big{(}(y,x)\big{)}$. Let $xy$ and $yz$ be two edges of $G$. We observe that $(y,x),(y,z)\in A_{y}$ in the graph $G''$. Since $g$ is an injective coloring of $G''$ and because $|A_{y}|=r\geq3$, it follows that $g$ assigns different colors to $(y,x)$ and $(y,z)$. Therefore, $h(xy)=g\big{(}(y,x)\big{)}\neq g\big{(}(y,z)\big{)}=h(yz)$. Hence, $h$ is an $r$-edge coloring of $G$. Therefore, $\chi'(G)\leq r$. This leads to $\chi'(G)=r$ due to the fact that $\chi'(F)\in\{\Delta(F),\Delta(F)+1\}$ for each simple graph $F$.

In fact, we have proved that $\chi'(G)=r$ if and only if $\chi_{i}(G'')=r$. In view of this, Lemma \ref{LG} completes the proof.
\end{proof}

Next,  we give a structural characterization of all graphs $G$ for which $\chi_{i}(G)=\Delta(G)$. In particular, this results in a simple characterization of all $r$-regular graph $G$ for which $\chi_{i}(G)=r$ (see Corollary \ref{cor:extremal}), which we present next.

Let $\Theta$ be the  family of graphs $G$ defined as follows. We begin with any graphs $H_{1},\dots,H_{t}$ of maximum degree at most $1$, so that at least one of them, say $H_{k}$, is nontrivial (that is, it contains at least one edge). Let $v$ be a non-isolated vertex of $H_k$. Let $G$ be obtained from the disjoint union $H_{1}+\cdots+H_{t}$ by
\begin{itemize}
  \item[$(a)$] joining $v$ by an edge to precisely one vertex of $H_{i}$ for each $1\leq i\neq k\leq t$, and
  \item[$(b)$] by adding some edges with one end-vertex in $V(H_{i})$ and the other one in $V(H_{j})$ such that $[V(H_{i}),V(H_{j})]$ is a matching, for each $1\leq i\neq j\leq t$ (note that $[V(H_{i}),V(H_{j})]=\emptyset$ can also be taken as a matching).
\end{itemize}
Note that, for instance, the path $P_4$ and the cycle $C_4$ are graphs of the family $\Theta$.

\begin{theorem}\label{Extremal}
If $G$ is an arbitrary graph, then $\chi_{i}(G)=\Delta(G)$ if and only if $G\in \Theta$.
\end{theorem}
\begin{proof}
Suppose first that $G\in \Theta$. It is clear from the construction that $\mathbb{H}=\{V(H_1),\dots,V(H_t)\}$ forms an injective coloring of $G$. Moreover, $v$ is a vertex of $G$ with $\deg(v)=t=\Delta(G)$. Therefore, $\chi_{i}(G)\leq|\mathbb{H}|=t=\Delta(G)$. So, $\chi_{i}(G)=\Delta(G)$ by (\ref{EQ1}).

Conversely, suppose that $\chi_{i}(G)=\Delta(G)$. Let $\{B_{1},\dots,B_{\Delta(G)}\}$ be a $\chi_{i}(G)$-coloring of $G$. For every $1\leq i,j\leq \Delta(G)$, every vertex in $B_{i}$ has at most one neighbor in $B_{j}$ because both $B_{i}$ and $B_{j}$ are open packings. This shows that ($i$) $\Delta(G[B_i])\leq1$ for each $1\leq i\leq \Delta(G)$, and ($ii$) $[B_{i},B_{j}]$ is a matching in $G$ for each $1\leq i\neq j\leq \Delta(G)$.

Now let $v\in B_{j}$ be a vertex of maximum degree. Since $|N(v)\cap B_i|\leq1$ for every $1\leq i\leq \Delta(G)$ and that $\deg(v)=\Delta(G)$, it follows that $v$ has precisely one neighbor in $B_i$ for each $1\leq i\leq \Delta(G)$.
It is now readily seen that $v,j,\Delta(G)$ and $G[B_1],\dots,G[B_{\Delta(G)}]$ have the same roles as $v,k,t$ and $H_{1},\dots,H_{t}$, respectively, have in the definition of the family $\Theta$. Thus, $G\in \Theta$.
\end{proof}

Hahn et al.~\cite{hkss} characterized the extremal $r$-regular graphs for the lower bound given in (\ref{EQ1}). Their characterization is based on some algebraic and topological techniques. However, the family $\Theta$ would be much clearer when restricted to $r$-regular graphs. In fact, we have the following immediate consequence of Theorem \ref{Extremal}.

\begin{corollary}\label{cor:extremal}
If $G$ is an $r$-regular graph, then $\chi_{i}(G)=r$ if and only if $G$ is obtained from an $r$-partite graph $H$ with partite sets $H_{1},\cdots,H_{r}$, such that\\
$(1)$ $|H_{1}|=\cdots=|H_{r}|\cong0$ \emph{(}mod $2$\emph{)} and\\
$(2)$ $[H_{i},H_{j}]$ is a perfect matching for each $1\leq i\neq j\leq r$,\\
 by making a perfect matching using the vertices in $H_{i}$ for each $1\leq i\leq r$.
\end{corollary}

The following sharp lower and upper bound on the injective chromatic number in terms of the $2$-distance number of a graph were found in~\cite{ko}: $$\frac{\chi_{2}(G)}{2}\leq \chi_{i}(G)\leq \chi_{2}(G).$$
Next, we present a sharp lower and upper bound on the injective chromatic number in terms of the open packing number of a graph.

It is shown in \cite{ss} that for any graph $G$ on at least three vertices, $\opack(G)=1$ if and only if $\diam(G)\leq2$ and every edge of $G$ lies on a triangle. It is readily observed that this is also a necessary and sufficient condition for $\chi_{i}(G)=|V(G)|$.

\begin{proposition}\label{propo}
If $G$ is a graph of order $n$, then
$$\frac{n}{\opack(G)}\leq \chi_{i}(G)\leq n-\opack(G)+1$$
and the bounds are sharp.
\end{proposition}
\begin{proof}
$(i)$ Let $\mathbb{B}=\{B_1,\dots,B_{|\mathbb{B}|}\}$ be a $\chi_{i}(G)$-coloring. Since every $B_i$ is an open packing in $G$, we have $n=\sum_{1\leq i\leq|\mathbb{B}|}|B_i|\leq|\mathbb{B}|\opack(G)=\chi_{i}(G)\opack(G)$. Hence, $\chi_{i}(G)\geq n/\opack(G)$. On the other hand, if $B$ is a $\opack(G)$-set, then $\{B\}\cup \big{\{}\{g\}\mid g\in V(G)\setminus B\big{\}}$ is an injective coloring of $G$ of cardinality $n-\opack(G)+1$. So, $\chi_{i}(G)\leq n-\opack(G)+1$.

That the lower bound is sharp can be seen by considering the cycles $C_{4m}$ or paths $P_{4m}$ for which $\chi_{i}(C_{4m})=\chi_{i}(P_{4m})=2=4m/\opack(C_{4m})=4m/\opack(P_{4m})$ since $\opack(C_{4m})=\opack(P_{4m})=2m$. The upper bound is sharp for $K_{n}$ as well as for $K_{1,n-1}$ on $n\geq3$ vertices.
\end{proof}

In the next result, we give a lower bound on the injective chromatic number of a graph $G$ in terms of its order, size and open packing number. Despite the fact that it is not comparable with $\Delta(G)$, the family of extremal graphs for the two lower bounds are the same as the family $\Theta$ when it is restricted to regular graphs. For the sake of convenience, we let $\Theta_r$ be the family of all regular graphs in $\Theta$. Note that the members of $\Theta_r$ are represented in the statement of Corollary~\ref{cor:extremal}.

\begin{theorem}\label{Delta}
For any connected graph $G$ of order $n\geq2$ and size $m$,
$$\chi_{i}(G)\geq \frac{1}{2}+\sqrt{\frac{1}{4}+\frac{2m-n}{\opack(G)}}.$$
The equality holds if and only if $G\in \Theta_r$.
\end{theorem}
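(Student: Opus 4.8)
The plan is to first clear the square root and reduce to a polynomial inequality. Writing $k=p_{o}(G)$ and recalling that $2m-n=\sum_{v\in V(G)}(\deg_{G}(v)-1)\ge 0$ since $G$ is connected of order at least two, the claimed bound $k\ge\big(1+\sqrt{1+4(2m-n)/\rho_{o}(G)}\big)/2$ becomes, after isolating the radical and squaring the nonnegative quantity $2k-1$, equivalent to
$$k(k-1)\rho_{o}(G)\ \ge\ 2m-n.$$
So it suffices to establish this inequality and then determine exactly when it is tight.

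For the inequality itself I would fix a $p_{o}(G)$-partition $\mathbb{B}=\{B_{1},\dots,B_{k}\}$ and exploit the fact already used for (\ref{EQ1}): since each $B_{i}$ is an open packing, every vertex of $G$ has at most one neighbor in $B_{i}$. Hence the $\deg_{G}(v)$ neighbors of any $v$ lie in exactly $\deg_{G}(v)$ distinct classes, and each $G[B_{i}]$ has maximum degree at most $1$. Writing $c(v)$ for the class of $v$, this yields $\deg_{G}(v)-1\le\big|\{j\ne c(v): N_{G}(v)\cap B_{j}\ne\emptyset\}\big|$, with equality precisely when $v$ has a neighbor inside its own class. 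Summing over $v$ and switching the order of summation rewrites the right-hand side as $\sum_{i}\sum_{j\ne i}\big|\{w\in B_{i}: N_{G}(w)\cap B_{j}\ne\emptyset\}\big|$; bounding each inner count by $|B_{i}|\le\rho_{o}(G)$ over the $k(k-1)$ ordered pairs $(i,j)$ gives exactly $2m-n\le k(k-1)\rho_{o}(G)$.

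The main obstacle is the equality characterization. I would read off the tightness conditions from the three estimates above: (a) every vertex has a neighbor in its own class, so each $G[B_{i}]$ is a perfect matching and $|B_{i}|$ is even; (b) $\big|\{w\in B_{i}: N_{G}(w)\cap B_{j}\ne\emptyset\}\big|=|B_{i}|$ for all $i\ne j$, which together with the one-neighbor-per-class bound forces every ordered pair to be joined by a perfect matching; and (c) $|B_{i}|=\rho_{o}(G)=n/k$ for all $i$. Setting $r=k$, $X_{i}=B_{i}$, and deleting the within-class matchings then produces an $r$-partite $H$ with $|X_{i}|=n/r$ and $H[X_{i}\cup X_{j}]\cong(n/r)P_{2}$, i.e. exactly the defining data of $\Psi$, with $n\equiv0\ (\text{mod }2r)$ since each $|X_{i}|$ is even; this is the forward direction. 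For the converse I would verify that any $G\in\Psi$ is $r$-regular, whence $p_{o}(G)=r$ by combining $p_{o}(G)\le r$ (the parts form an OPP) with $p_{o}(G)\ge\Delta(G)=r$ from (\ref{EQ1}), and $\rho_{o}(G)=n/r$ (lower bound from a single part; upper bound from summing $\sum_{w}|N_{G}(w)\cap B|=r|B|\le n$ over any open packing $B$). Substituting $2m-n=n(r-1)$ then makes both sides equal. The delicate point throughout is tracking all equality cases simultaneously and reconstructing $H$ so that it genuinely lands in $\Psi$.
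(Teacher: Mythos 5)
Your proof is correct, and its skeleton coincides with the paper's: both reduce the radical bound to the polynomial inequality $p_{o}(G)\big(p_{o}(G)-1\big)\rho_{o}(G)\geq 2m-n$, prove that inequality by double counting against a $p_{o}(G)$-partition using the property that every vertex has at most one neighbor in each class, extract the same three tightness conditions (a perfect matching inside every class, a perfect matching between every two classes, all classes of size $\rho_{o}(G)=n/p_{o}(G)$), and rebuild $H$ so that $G\in\Psi$. Where you differ is in the bookkeeping, and the differences buy some self-containment. For the inequality, the paper counts edges grouped by pairs of classes after relabeling so that $|P_{1}|\leq\cdots\leq|P_{p_{o}(G)}|$, passing through the intermediate bounds $|[P_{i},P_{j}]|\leq|P_{i}|$ and $|P_{i}|\leq|P_{p_{o}(G)}|\leq\rho_{o}(G)$; your vertex-centric identity $2m-n=\sum_{v}(\deg_{G}(v)-1)$, followed by the switch of summation, avoids the relabeling entirely and makes the three equality conditions fall out one per estimate, which is marginally cleaner to track. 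In the converse, the paper shows $\rho_{o}(G)=n/r$ by observing that $X_{1}$ is simultaneously an open packing and a total dominating set and citing $\rho_{o}\leq\gamma_{t}$, and then closes by comparing $r=2m/n$ against the already-established lower bound; you instead bound $\rho_{o}(G)\leq n/r$ by a direct degree-sum count over an arbitrary open packing and obtain $p_{o}(G)=r$ from $p_{o}(G)\geq\Delta(G)=r$, i.e., inequality (\ref{EQ1}), combined with the OPP $\{X_{1},\dots,X_{r}\}$. Both routes are sound; yours avoids invoking the external inequality $\rho_{o}\leq\gamma_{t}$ at the cost of one extra elementary count, while the paper's is shorter given the cited facts.
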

\begin{proof}
Let $\mathcal{P}=\{P_{1},\dots,P_{\chi_{i}(G)}\}$ be a $\chi_{i}(G)$-coloring. Relabeling the subscripts if necessary, we may assume that $|P_{1}|\leq\cdots\leq|P_{\chi_{i}(G)}|$. By definition, every vertex in $P_{i}$ has at most one neighbor in $P_{j}$ for all $1\leq i<j\leq \chi_{i}(G)$. Moreover, the subgraph of $G$ induced by $P_{i}$, for each $1\leq i\leq \chi_{i}(G)$, has at most $|P_{i}|/2$ edges. We therefore conclude that,
\begin{equation}\label{Inequ}
\begin{array}{lcl}
m=\sum_{1\leq i<j\leq \chi_{i}(G)}|[P_{i},P_{j}]|+\sum_{1\leq i\leq \chi_{i}(G)}|[P_{i},P_{i}]|&\leq& \sum_{i=1}^{\chi_{i}(G)-1}|P_{i}|(\chi_{i}(G)-i)+\dfrac{n}{2}\vspace{0.5mm}\\
&\leq& |P_{\chi_{i}(G)}|\sum_{i=1}^{\chi_{i}(G)-1}(\chi_{i}(G)-i)+\dfrac{n}{2}\vspace{0.5mm}\\
&\leq&\big{(}\dfrac{\chi_{i}(G)(\chi_{i}(G)-1)}{2}\big{)}\opack(G)+\dfrac{n}{2}.
\end{array}
\end{equation}
Solving the inequality chain (\ref{Inequ}) for $\chi_{i}(G)$, we get $\chi_{i}(G)\geq\big{(}1+\sqrt{1+4(2m-n)/\opack(G)}\big{)}/2$.

Suppose that the lower bound holds with equality. Therefore, all three inequalities in (\ref{Inequ}) necessarily hold with equality. In particular, together the second and third resulting equalities show that $|P_{1}|=\cdots=|P_{\chi_{i}(G)}|=\opack(G)$. Moreover, the first one implies that any vertex in $P_{i}$ has precisely one neighbor in each other $P_{j}$, and $|[P_{i},P_{i}]|=|P_{i}|/2$ for all $1\leq i\leq \chi_{i}(G)$. This shows that the edges of each subgraph $G[P_{i}]$ form a perfect matching. It is now easy to observe that the resulting subgraph of $G$ by removing $\cup_{i=i}^{\chi_{i}(G)}[P_{i},P_{i}]$ is a $\chi_{i}(G)$-partite graph with partite sets $P_i$ of cardinality $|P_i|\cong0$ (mod $2$) for $1\leq i\leq \chi_{i}(G)$. That $G\in \Theta_r$ can be seen by observing the fact that $\chi_{i}(G)$ and $P_{1},\cdots,P_{\chi_{i}(G)}$ have the same roles as $r$ and $H_{1},\cdots,H_{r}$ have in the description of the members of $\Theta_r$, respectively.

Conversely, let $G\in \Theta_r$. By the structure of $G$, the vertex partition $\mathbb{X}=\{H_{1},\dots,H_{r}\}$ is an injective coloring of $G$. Hence, $\chi_{i}(G)\leq r$. Since $H_{1}$ is both an open packing and a total dominating set in $G$, it follows that $|H_{1}|\leq \opack(G)\leq \gamma_{t}(G)\leq|H_{1}|$, and so $\opack(G)=|H_{1}|=n/r$. Moreover, we have $r=2m/n$ as $2m=\sum_{v\in V(G)}\deg_{G}(v)=rn$. A simple calculation then shows that $\big{(}1+\sqrt{1+4(2m-n)/\opack(G)}\big{)}/2=2m/n=r\geq \chi_{i}(G)$. This implies the equality in the lower bound.
\end{proof}

The following theorem shows that the simple lower bound given in (\ref{EQ1}) gives the exact values of $\chi_{i}$ when dealing with nontrivial trees. An additional goal is to give a proof for it that can be implemented as a polynomial-time algorithm for obtaining an optimal injective coloring of a tree.

We recall that the {\em eccentricity} of a vertex $v$ in a graph $G$, written $\varepsilon_{G}(v)$, is the maximum of distances from $v$ to other vertices of $G$.

\begin{theorem}\label{Tree}
For any tree $T$ on at least two vertices, $\chi_{i}(T)=\Delta(T)$.
\end{theorem}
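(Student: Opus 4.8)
The lower bound $p_o(T)\ge\Delta(T)$ is already in hand: it is exactly inequality (\ref{EQ1}). So the whole task is the reverse inequality $p_o(T)\le\Delta(T)$, i.e.\ exhibiting an OPP-function $f\colon V(T)\to\{1,\dots,\Delta(T)\}$. The plan is to reformulate the defining condition of an OPP-function and then build $f$ greedily along a rooted tree. First I would record that $f$ is an OPP-function precisely when, for \emph{every} vertex $v$, the colors of the vertices in $N_T(v)$ are pairwise distinct (this is just a restatement of ``no vertex is adjacent to two equally colored vertices''). Second, I would use the tree structure: two vertices are adjacent in $\mathcal{N}(T)$ iff they share a common neighbor, and in a tree any two vertices have \emph{at most one} common neighbor, so the constraints are entirely local to neighborhoods; in particular, adjacent vertices of $T$ may safely receive the same color, since two adjacent vertices cannot have a common neighbor without creating a triangle.

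Next I would root $T$ at an arbitrary vertex $r$, assign $f(r)=1$, and process the remaining vertices in a top-down (BFS) order. When coloring a vertex $u$ with parent $p$, I would forbid only the colors already used on the siblings of $u$ (the other children of $p$ that have been colored) together with the color of the grandparent $p(p)$ when it exists, and then give $u$ any color not in this forbidden set. The point is that these are exactly the constraints coming from the requirement that the neighborhood $N_T(p)=\{p(p)\}\cup(\text{children of }p)$ be rainbow-colored; the requirement that $N_T(u)$ be rainbow-colored is deferred and enforced when the children of $u$ are later colored (each child avoids its siblings and its grandparent $p$).

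The step that carries the whole argument is the counting that guarantees a free color always remains. If $p$ is not the root, then $p$ has $\deg_T(p)-1$ children, so $u$ has at most $\deg_T(p)-2$ already-colored siblings, and adding the single grandparent gives at most $\deg_T(p)-1\le\Delta(T)-1$ forbidden colors; if $p$ is the root there is no grandparent and at most $\deg_T(p)-1\le\Delta(T)-1$ forbidden colors. In every case at most $\Delta(T)-1$ colors are excluded from a palette of $\Delta(T)$, so $u$ can always be colored. The remaining (routine) verification is that this local rule yields a globally valid OPP-function: by construction every neighborhood $N_T(v)$ is rainbow-colored, which by the reformulation of the first paragraph is exactly the OPP-function condition. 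Hence $p_o(T)\le\Delta(T)$, and together with (\ref{EQ1}) this gives $p_o(T)=\Delta(T)$. I would finally remark that this procedure runs in linear time in $|V(T)|$, delivering the promised polynomial-time algorithm for an optimal OPP of a tree.
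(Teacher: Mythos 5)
Your proposal is correct and follows essentially the same route as the paper: both establish the lower bound via inequality (\ref{EQ1}) and then build a $\Delta(T)$-labeling top-down on a rooted tree, giving the children of each vertex pairwise distinct labels that avoid the grandparent's label, with the degree count $\deg_T(p)-1\le\Delta(T)-1$ guaranteeing a free label. The only cosmetic differences are that the paper roots $T$ at a vertex of maximum degree and describes the label assignment explicitly, whereas you root arbitrarily and phrase it as a BFS greedy with an explicit forbidden-color count, which is if anything a slightly cleaner write-up of the same argument.
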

\begin{proof}
We have $\chi_{i}(T)\geq \Delta(T)$ by the inequality (\ref{EQ1}). Therefore, it suffices to construct an injective coloring of $T$ of cardinality $\Delta(T)$.

Let $r$ be a vertex of maximum degree in $T$. We root $T$ at $r$. Then, any vertex at distance $\varepsilon_{T}(r)$ from $r$ is a leaf. We assign $1$ to $r$ and the colors $1,\dots,\Delta(T)$ to the children of $r$ so that any of them takes a unique color. If $T$ is a star, then we are done. So, let $v$ be a child of $r$ colored with $i\in \{1,\dots,\Delta(T)\}$ which is not a leaf. Since $\deg(v)\leq \deg(r)=\Delta(T)$, it follows that $v$ has at most $\Delta(T)-1$ children. We now assign $\deg(v)-1$ colors among $\{1,\dots,\Delta(T)\}\setminus\{1\}$ to its children so that any of these $\deg(v)-1$ colors appears on only one such a child. This process is continued until all descendants of $v$ are assigned colors among $\{1,\dots,\Delta(T)\}$. Iterating this process for any other none-leaf child of $r$ (if any), any vertex of $T$ takes a color from $\{1,\dots,\Delta(T)\}$ so that no vertex is adjacent to two vertices having the same colors. Therefore, the subsets $V_{i}=\{v\in V(T)\mid v \ \mbox{is colored with}\ i\}$ for $1\leq i\leq \Delta(T)$ give an injective coloring of $T$ of cardinality $\Delta(T)$. This completes the proof.
\end{proof}


\section{Two-step graphs and the equality $\chi_{i}(G)=\omega\big{(}\mathcal{N}(G)\big{)}$}\label{Sect:final}


Based on the relationship \eqref{open}, it is likely that constructing the two-step graphs of some families of graphs can be useful in several situations, in particular in the context of injective colorings. It can be easily observed that if $G$ is a cycle of order $n\ge 3$, then $\mathcal{N}(G)$ is either a cycle of order $n$ (when $n$ is odd) or the disjoint union of two cycles of order $n/2$ (when $n$ is even). Also, if $G$ is the path $P_n$, then $\mathcal{N}(G)$ is formed by two disjoint paths of order $\left\lceil n/2\right\rceil$ and $\left\lfloor n/2\right\rfloor$; if $G$ is the complete graph of order $n\neq2$, then $\mathcal{N}(G)=G=K_n$; and if $G$ is a complete bipartite graph $K_{r,t}$, then $\mathcal{N}(G)$ is formed by two disjoint complete graphs $K_r$ and $K_t$. Moreover, if $G$ is a triangle-free graph of diameter 2, then one can readily observe that $\mathcal{N}(G)$ is isomorphic to the complement graph $\overline{G}$. Notice that this last comment together with equality \eqref{open} leads to $\chi_i(G)=\chi(\overline{G})$ for any triangle-free graph of diameter 2.

On the other hand, by the structure of the two-step graph $\mathcal{N}(G)$ of a graph $G$, we observe that $\omega\big{(}\mathcal{N}(G)\big{)}$ equals the maximum number of vertices of $G$ such that any two of them have a common neighbor. This fact together with relationship \eqref{open} gives sense to considering whether $\chi_{i}(G)=\omega\big{(}\mathcal{N}(G)\big{)}$, since it is well known that $\chi(G)\ge \omega(G)$ for any graph $G$. We can easily observe that $\omega\big{(}\mathcal{N}(G)\big{)}\ge \Delta(G)$. However, it can happen that $\omega\big{(}\mathcal{N}(G)\big{)}\gg\Delta(G)$.
To see this, we consider for instance the direct product graph $K_r\times K_t$ with $r,t\geq3$. By the adjacency rules of direct product and two-step graphs, we observe that $\mathcal{N}(K_r\times K_t)\cong K_{rt}$. Therefore, we have $\omega\big{(}\mathcal{N}(K_r\times K_t)\big{)}=rt$ while $\Delta((K_r\times K_t))=(r-1)(t-1)$. In fact, we have the following result.

\begin{corollary}
\label{cor-omegaDelta}
For any positive integer $\ell$ there exists a graph $G$ such that
$\omega\big{(}\mathcal{N}(G)\big{)}-\Delta(G)>\ell$.
\end{corollary}

In the next result, we give a large family of graphs whose two-step graphs are perfect.

\begin{theorem}\label{Chordal}
Let $G$ be a $C_{2k}$-free graph for each $k\geq2$. Then, $\chi_{i}(G)=\omega\big{(}\mathcal{N}(G)\big{)}$.
\end{theorem}

\begin{proof}
We first prove that $\mathcal{N}(G)$ is a chordal graph. Suppose to the contrary that $C_{k}:v_{1}v_{2}\cdots v_{k}v_{1}$ is a chordless cycle in $\mathcal{N}(G)$ for some $k\geq4$. Therefore, there exist $u_{1},\dots,u_{k}\in V(G)$ such that $\{v_{1},v_{2}\}\subseteq N(u_{1}),\dots,\{v_{k-1},v_{k}\}\subseteq N(u_{k-1}),\{v_{k},v_{1}\}\subseteq N(u_{k})$. Because $C_{k}$ is chordless, it follows that $\{u_{1},u_{2},\dots,u_{k}\}\cap \{v_{1},v_{2},\dots,v_{k}\}=\emptyset$. If the vertices $u_{i}$ are pairwise distinct, then $v_{1}u_{1}v_{2}\cdots v_{k-1}u_{k-1}v_{k}u_{k}v_{1}$ is a cycle in $G$ on $2k$ vertices, a contradiction. Therefore, $u_{s}=u_{t}$ for some $1\leq s<t\leq k$. If $t=s+1$, then $v_{s}v_{s+2}$ is a chord of $C_{k}$ in $\mathcal{N}(G)$. If $t>s+1$, then $v_{s}v_{t}$ is a chord of $C_{k}$ in $\mathcal{N}(G)$. Each case leads to a contradiction. Therefore, $\mathcal{N}(G)$ is chordal, and so it is a perfect graph. Thus, $\chi\big{(}\mathcal{N}(G)\big{)}=\omega\big{(}\mathcal{N}(G)\big{)}$, and by relationship \eqref{open}, $\chi_{i}(G)=\omega\big{(}\mathcal{N}(G)\big{)}$.
\end{proof}

Let $T$ be a tree and $Q$ be a maximum clique in $\mathcal{N}(T)$. So, any two vertices of $Q$ have a common neighbor in $T$. Since $T$ is a tree, $Q$ is independent in $T$. With this in mind, a unique vertex is adjacent to all vertices of $Q$ in $T$. (Indeed, otherwise the subgraph of $T$ induced by the union of $Q$ and the set of vertices each of which is adjacent to at least two vertices in $Q$ contains a cycle, a contradiction.) This implies that $\chi_{i}(T)=\omega\big{(}\mathcal{N}(T)\big{)}=\Delta(T)$. Consequently, Theorem \ref{Tree} is an immediate consequence of Theorem \ref{Chordal}.  However, as mentioned earlier, the proof of Theorem~\ref{Tree} provides an efficient way to obtain an optimal injective coloring of an arbitrary tree.

Note that the condition of being $C_{2k}$-free for each $k\geq2$ cannot be removed in Theorem \ref{Chordal}. To see this, consider the cycle $C_{4t+2}$ for $t\geq2$. It is easy to see that $\mathcal{N}(C_{4t+2})\cong2C_{2t+1}$, and so $\chi_i(C_{4t+2})=\chi\big{(}\mathcal{N}(C_{4t+2})\big{)}=3\neq2=\omega\big{(}\mathcal{N}(C_{4t+2})\big{)}$. In fact, this example shows that the equality in Theorem~\ref{Chordal} does not necessarily hold even if $G$ is a perfect graph. However, it remains true for several infinite families of graphs containing even cycles as induced subgraphs.

\begin{theorem}\label{Compliment}
If the complement $\overline{G}$ of a graph $G$ is a bipartite graph, then $\chi_{i}(G)=\omega\big{(}\mathcal{N}(G)\big{)}$.
\end{theorem}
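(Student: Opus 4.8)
The plan is to prove the stronger statement that $\mathcal{N}(G)$ is a perfect graph; since a perfect graph satisfies $\chi=\omega$ on itself, this at once yields $\chi(\mathcal{N}(G))=\omega(\mathcal{N}(G))$. First I would translate the hypothesis: writing the bipartition of $\overline{G}$ as $V(G)=A\cup B$, the fact that $A$ and $B$ are independent in $\overline{G}$ means that $A$ and $B$ both induce cliques in $G$. Thus $G$ is co-bipartite, and the goal reduces to showing that $\mathcal{N}(G)$ is again (essentially) the complement of a bipartite graph.

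The key structural observation is that a clique of $G$ on at least three vertices is also a clique of $\mathcal{N}(G)$: if $u,v$ lie in a clique $C$ with $|C|\ge 3$, any third vertex $w\in C$ is a common neighbor of $u$ and $v$ in $G$, so $uv\in E(\mathcal{N}(G))$. Hence, when $|A|\ge 3$ and $|B|\ge 3$, both $A$ and $B$ are cliques of $\mathcal{N}(G)$, so they are independent sets of $\overline{\mathcal{N}(G)}$ and $\overline{\mathcal{N}(G)}$ is bipartite. Therefore $\mathcal{N}(G)$ is the complement of a bipartite graph, which is perfect, and the desired equality follows. Concretely, one can even avoid quoting perfectness: letting $\Gamma$ be the bipartite graph on $A\cup B$ in which $a\in A$ and $b\in B$ are joined exactly when they have no common $G$-neighbor, a short count (using that each open packing meets each of $A,B$ in at most one vertex) gives $\chi(\mathcal{N}(G))=n-\nu(\Gamma)$ and $\omega(\mathcal{N}(G))=n-\tau(\Gamma)$, where $\nu$ and $\tau$ denote maximum matching and minimum vertex cover; K\"onig's theorem $\nu(\Gamma)=\tau(\Gamma)$ then gives the equality directly.

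The main obstacle is the boundary case $\min\{|A|,|B|\}\le 2$, where a clique of $G$ need not remain a clique of $\mathcal{N}(G)$. A careful look shows that two vertices $u,v$ of the same part $A$ fail to be adjacent in $\mathcal{N}(G)$ only when $A=\{u,v\}$ and $u,v$ have no common neighbor in $B$; consequently $\overline{\mathcal{N}(G)}$ differs from a bipartite graph with parts $A,B$ by at most one extra edge inside $A$ (present only if $|A|=2$) and at most one inside $B$ (present only if $|B|=2$). I would then argue that this graph is still perfect: whenever such an extra edge $a_1a_2\subseteq A$ occurs, the opposite part $B$ (assuming $|B|\ge 3$) is a clique of $\mathcal{N}(G)$, hence independent in $\overline{\mathcal{N}(G)}$, so no two of its vertices are consecutive on a cycle and the edge $a_1a_2$ cannot be completed to an odd hole; moreover the graph is far too sparse to contain an odd antihole, so by the strong perfect graph theorem $\overline{\mathcal{N}(G)}$ — and hence $\mathcal{N}(G)$ — is perfect. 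The finitely many genuinely small configurations, namely those with $|A|,|B|\le 2$ (so $n\le 4$, e.g.\ $G=C_4$), I would simply verify by hand. Combining all cases gives $\chi(\mathcal{N}(G))=\omega(\mathcal{N}(G))$.
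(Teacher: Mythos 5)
Your proposal is correct, but it takes a genuinely different route from the paper's proof. The paper argues constructively and elementarily: writing $X,Y$ for the sides of $\overline{G}$ with $|X|\leq|Y|$, it splits into cases ($\overline{G}$ complete bipartite; $|X|\geq3$ with $[X,Y]$ a matching or not; $|X|\leq2$), in each case identifies explicit cliques of $\mathcal{N}(G)$ (such as $Q_{1}=X\cup Y'\cup(N_{G}(X')\cap Y)$) and exhibits a concrete proper coloring of $\mathcal{N}(G)$ with that many colors, or, in the degenerate cases, determines $\mathcal{N}(G)$ up to isomorphism. You instead prove the stronger structural statement that $\mathcal{N}(G)$ is perfect. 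Your key lemma --- a clique of $G$ on at least three vertices remains a clique of $\mathcal{N}(G)$ --- shows that when $|A|,|B|\geq3$ the complement $\overline{\mathcal{N}(G)}$ is bipartite with parts $A,B$, after which either perfection of co-bipartite graphs or your direct count $\chi\big{(}\mathcal{N}(G)\big{)}=n-\nu(\Gamma)$, $\omega\big{(}\mathcal{N}(G)\big{)}=n-\tau(\Gamma)$ combined with K\"{o}nig's theorem finishes the main case; the boundary cases $\min\{|A|,|B|\}\leq2$ you settle by excluding odd holes and odd antiholes plus a finite check. Your route is shorter and more conceptual, and it isolates \emph{why} the theorem holds (co-bipartiteness essentially passes from $G$ to $\mathcal{N}(G)$); the paper's route uses no perfect-graph machinery at all (in particular not the strong perfect graph theorem, a very heavy hammer here) and yields optimal colorings explicitly.

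Two steps in your boundary case need tightening, though both are easily repaired. First, to exclude odd holes when $|A|=2$, $|B|\geq3$ and the extra edge $a_{1}a_{2}$ is present: since $B$ is independent in $\overline{\mathcal{N}(G)}$, every edge of $\overline{\mathcal{N}(G)}$ meets $A$, so a cycle of length at least $5$ would need at least five edge-incidences with $A$ while the two vertices of $A$ can supply only four; hence there are no cycles of length $5$ or more at all, and every odd cycle is a triangle through $a_{1}a_{2}$. Second, the phrase ``far too sparse to contain an odd antihole'' should be replaced by the precise argument: an odd antihole has at least five vertices, hence at least three in $B$; these three are pairwise nonadjacent in $\overline{\mathcal{N}(G)}$, contradicting the fact that an odd antihole has independence number $2$. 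With these two observations spelled out, your application of the strong perfect graph theorem, and with it the whole proof, is sound.
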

\begin{proof}
Let $G$ be a graph of order $n$, and let $X$ and $Y$ be the partite sets of $\overline{G}$ with $|X|\leq|Y|$. Since $\overline{G}$ is bipartite, both $X$ and $Y$ are cliques in $G$. If $\overline{G}$ is a complete bipartite graph, then $G$ is isomorphic to the disjoint union $K_{|X|}+K_{|Y|}$. In such a situation, we have
\begin{equation*}
\mathcal{N}(G)\cong\left \{
\begin{array}{lll}
\overline{K_{n}} & \mbox{if}\ \ |Y|\leq2,\\
\overline{K_{|X|}}+K_{|Y|} & \mbox{if}\ \ |X|\leq2\ \mbox{and}\ |Y|\geq3,\\
K_{|X|}+K_{|Y|} & \mbox{if}\ \ |X|\geq3.
\end{array}
\right.
\end{equation*}
In each case, $\chi_{i}(G)=\chi\big{(}\mathcal{N}(G)\big{)}=\omega\big{(}\mathcal{N}(G)\big{)}\in \{1,|Y|\}$.

So, in what follows we may assume that $\overline{G}$ is not complete bipartite. Therefore, $[X,Y]\subseteq E(G)$ is nonempty. We distinguish two cases depending on $|X|$.\vspace{1mm}

\noindent
\textit{Case 1.} $|X|\geq3$. Suppose first that $[X,Y]=\{x_{1}y_{1},\dots,x_{k}y_{k}\}$ is a matching in $G$. Since $Y$ is a clique in $G$, we deduce by the structure of $\mathcal{N}(G)$ that $x_{i}$ is adjacent to all vertices in $Y\setminus\{y_{i}\}$ in $\mathcal{N}(G)$ for each $1\leq i\leq k$. Moreover, $x_{1}y_{1},\dots,x_{k}y_{k}\notin E\big{(}\mathcal{N}(G)\big{)}$. We also observe that $[X\setminus\{x_{1},\cdots,x_{k}\},Y\setminus\{y_{1},\cdots,y_{k}\}]$ is empty in $\mathcal{N}(G)$. In such a situation, $\omega\big{(}\mathcal{N}(G)\big{)}=|Y|$ because $|Y|\geq|X|$. Let $f$ assign the colors
\begin{itemize}
  \item $1,\dots,k$ to the vertices in $\{x_{1},y_{1}\},\dots,\{x_{k},y_{k}\}$, respectively,
  \item $k+1,\dots,|Y|$ to the other vertices of $Y$ (if any), and
  \item $k+1,\dots,|X|$ to the other vertices of $X$ (if any).
\end{itemize}
It is readily seen that $f$ is a coloring of $\mathcal{N}(G)$ that assigns $|Y|$ colors to the vertices of $V\big{(}\mathcal{N}(G)\big{)}=V(G)$. Therefore, $\chi_{i}(G)=\chi\big{(}\mathcal{N}(G)\big{)}\leq|Y|=\omega\big{(}\mathcal{N}(G)\big{)}$. Since also, $\chi\big{(}\mathcal{N}(G)\big{)}\ge \omega\big{(}\mathcal{N}(G)\big{)}$ we obtain the desired equality.

Suppose now that $[X,Y]$ is not a matching. This shows that $|N_{G}(x)\cap Y|\geq2$ or $|N_{G}(y)\cap X|\geq2$ for some $x\in X$ or $y\in Y$, respectively. Let $X'=\{x\in X\,:\, |N_{G}(x)\cap Y|\geq2\}$ and $Y'=\{y\in Y\,:\, |N_{G}(y)\cap X|\geq2\}$. By the adjacency rule of $\mathcal{N}(G)$, we have $N_{\mathcal{N}(G)}(x)\cap Y=Y$ and $N_{\mathcal{N}(G)}(y)\cap X=X$ for any $x\in X'\cup(N_{G}(Y')\cap X)$ and $y\in Y'\cup(N_{G}(X')\cap Y)$, respectively. It is not hard to see that both $Q_{1}=X\cup Y'\cup(N_{G}(X')\cap Y)$ and $Q_{2}=Y\cup X'\cup(N_{G}(Y')\cap X)$ are cliques in $\mathcal{N}(G)$. Let now $X''=X\setminus\big{(}X'\cup(N_{G}(Y')\cap X)\big{)}$ and $Y''=Y\setminus\big{(}Y'\cup(N_{G}(X')\cap Y)\big{)}$. Note that each vertex $x\in X''$ (resp. $y\in Y''$) has at most one neighbor in $Y$ (resp. $X$), and this neighbor belongs to $Y''$ (resp. $X''$), necessarily. This implies that $[X'',Y'']$ is a matching in $G$. Suppose that $[X'',Y'']=\{x_{1}''y_{1}'',\dots,x_{r}''y_{r}''\}$ (see Figure \ref{Fig2}). Again by using the adjacency rule of $\mathcal{N}(G)$, we observe that none of the edges $x_{1}''y_{1}'',\dots,x_{r}''y_{r}''$ appears in $\mathcal{N}(G)$.

Suppose first that $|X''|\geq|Y''|$. We consider a function $g$ that assigns the colors
\begin{itemize}
  \item $1,\dots,r$ to the vertices in $\{x_{1}'',y_{1}''\},\dots,\{x_{r}'',y_{r}''\}$, respectively,
  \item $r+1,\dots,|X''|$ to the other vertices in $X''$ (if any),
  \item $|X''|+1,\dots,|X|$ to the vertices in $X\setminus X''$,
  \item $|X|+1,\dots,|Q_{1}|$ to the vertices in $Q_{1}\setminus X$, and
  \item $r+1,\dots,|Y''|$ to the other vertices in $Y''$ (if any).
\end{itemize}
Note that $g$ is a coloring of $\mathcal{N}(G)$ assigning $|Q_{1}|$ colors to the vertices of $\mathcal{N}(G)$. Therefore, $\chi_{i}(G)=\chi\big{(}\mathcal{N}(G)\big{)}\leq|Q_{1}|\leq \omega\big{(}\mathcal{N}(G)\big{)}$. Now, since $\chi\big{(}\mathcal{N}(G)\big{)}\ge \omega\big{(}\mathcal{N}(G)\big{)}$ we obtain the desired equality.

In a similar fashion, we deduce that $\chi_{i}(G)=\chi\big{(}\mathcal{N}(G)\big{)}\leq|Q_{2}|\leq \omega\big{(}\mathcal{N}(G)\big{)}$ when $|Y''|\geq|X''|$, and again as $\chi\big{(}\mathcal{N}(G)\big{)}\ge \omega\big{(}\mathcal{N}(G)\big{)}$ we get the desired equality when $|X|\geq3$.\vspace{1mm}

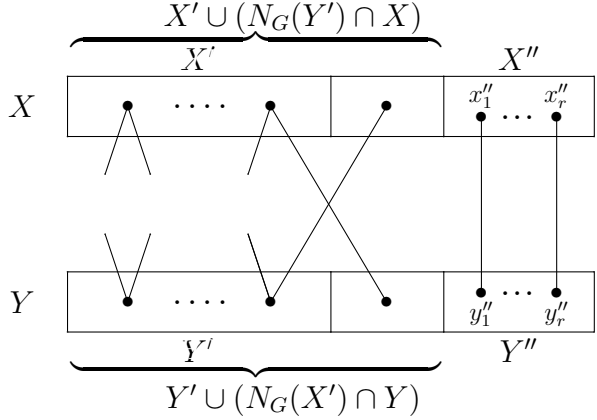
\begin{figure}[ht]
 \centering
\begin{tikzpicture}[scale=.02, transform shape]
\node [draw, shape=circle] (v1) at  (0,0) {};
\node [draw, shape=circle] (v2) at  (350,0) {};
\node [draw, shape=circle] (v3) at  (0,-40) {};
\node [draw, shape=circle] (v4) at  (350,-40) {};
\node [scale=50] at (-30,-20) {\large $X$};

\node [draw, shape=circle] (v1') at  (0,-130) {};
\node [draw, shape=circle] (v2') at  (350,-130) {};
\node [draw, shape=circle] (v3') at  (0,-170) {};
\node [draw, shape=circle] (v4') at  (350,-170) {};
\node [scale=50] at (-30,-150) {\large $Y$};

\draw (v1)--(v2);
\draw (v3)--(v4);
\draw (v1)--(v3);
\draw (v2)--(v4);

\draw (v1')--(v2');
\draw (v3')--(v4');
\draw (v1')--(v3');
\draw (v2')--(v4');

\node [draw, shape=circle] (u1) at  (250,0) {};
\node [draw, shape=circle] (u2) at  (175,0) {};
\node [draw, shape=circle] (u3) at  (250,-40) {};
\node [draw, shape=circle] (u4) at  (175,-40) {};
\node [scale=50] at (87,12) {\large $X'$};
\node [scale=50] at (300,12) {\large $X''$};
\node [scale=50] at (150,39) {\large $X'\cup(N_{G}(Y')\cap X)$};
\node [scale=50] at (150,-215) {\large $Y'\cup(N_{G}(X')\cap Y)$};

\draw (u1)--(u3);
\draw (u2)--(u4);

\node [draw, shape=circle] (w1) at  (250,-130) {};
\node [draw, shape=circle] (w2) at  (175,-130) {};
\node [draw, shape=circle] (w3) at  (250,-170) {};
\node [draw, shape=circle] (w4) at  (175,-170) {};
\node [scale=50] at (87,-182) {\large $Y'$};
\node [scale=50] at (300,-182) {\large $Y''$};

\draw (w1)--(w3);
\draw (w2)--(w4);\vspace{250mm}
\node [scale=50] at (125,18) {$\overbrace{\begin{color}{white}HHHHHHHHHHHHHH\end{color}}$};
\node [scale=50] at (125,-188) {$\underbrace{\begin{color}{white}HHHHHHHHHHHHHH\end{color}}$};

\node [scale=40] at (40,-20) {\large $\bullet$};
\node [draw, shape=circle] (p) at (40,-20) {};
\node [draw, shape=circle] (p1) at (25,-65) {};
\node [draw, shape=circle] (p2) at (55,-65) {};
\draw (p)--(p1);
\draw (p)--(p2);
\node [scale=60] at (73,-20) {\large $.$};
\node [scale=60] at (83,-20) {\large $.$};
\node [scale=60] at (93,-20) {\large $.$};
\node [scale=60] at (103,-20) {\large $.$};
\node [scale=40] at (135,-20) {\large $\bullet$};
\node [draw, shape=circle] (p') at (135,-20) {};
\node [draw, shape=circle] (p1') at (120,-65) {};
\node [draw, shape=circle] (p2') at (212,-150) {};
\draw (p')--(p1');
\draw (p')--(p2');

\node [scale=40] at (212,-20) {\large $\bullet$};

\node [scale=40] at (275,-27) {\large $\bullet$};
\node [scale=40] at (325,-27) {\large $\bullet$};
\node [scale=60] at (291,-27) {\large $.$};
\node [scale=60] at (299,-27) {\large $.$};
\node [scale=60] at (307,-27) {\large $.$};
\node [scale=40] at (275,-13) {\large $x_{1}''$};
\node [scale=40] at (325,-13) {\large $x_{r}''$};

\node [scale=40] at (40,-150) {\large $\bullet$};
\node [draw, shape=circle] (q) at (40,-150) {};
\node [draw, shape=circle] (q1) at (25,-105) {};
\node [draw, shape=circle] (q2) at (55,-105) {};
\draw (q)--(q1);
\draw (q)--(q2);
\node [scale=60] at (73,-150) {\large $.$};
\node [scale=60] at (83,-150) {\large $.$};
\node [scale=60] at (93,-150) {\large $.$};
\node [scale=60] at (103,-150) {\large $.$};
\node [scale=40] at (135,-150) {\large $\bullet$};
\node [draw, shape=circle] (q') at (135,-150) {};
\node [draw, shape=circle] (q1') at (120,-105) {};
\node [draw, shape=circle] (q2') at (212,-20) {};
\draw (q')--(q1');
\draw (q')--(q1');
\draw (q')--(q2');

\node [scale=40] at (212,-150) {\large $\bullet$};

\node [scale=40] at (275,-144) {\large $\bullet$};
\node [scale=40] at (325,-144) {\large $\bullet$};
\node [scale=60] at (291,-144) {\large $.$};
\node [scale=60] at (299,-144) {\large $.$};
\node [scale=60] at (307,-144) {\large $.$};
\node [scale=40] at (275,-158) {\large $y_{1}''$};
\node [scale=40] at (325,-158) {\large $y_{r}''$};

\node [draw, shape=circle] (x_{1}'') at  (275,-27) {};
\node [draw, shape=circle] (y_{1}'') at  (275,-144) {};
\draw (x_{1}'')--(y_{1}'');

\node [draw, shape=circle] (x_{r}'') at  (325,-27) {};
\node [draw, shape=circle] (y_{r}'') at  (325,-144) {};
\draw (x_{r}'')--(y_{r}'');
\end{tikzpicture}
  \caption{The graph $G$ described in Case $1$ of the proof of Theorem \ref{Compliment} when $[X,Y]$ is not a matching.}\label{Fig2}
\end{figure}

\noindent
\textit{Case 2.} $|X|\leq2$. We now consider two possibilities depending on $|Y|$.\vspace{0.75mm}

\noindent
\textit{Subcase 2.1.} $|Y|\geq3$. Since $\overline{G}$ is not complete bipartite, there exists an edge $xy\in E(G)$ in which $x\in X$ and $y\in Y$. If $|X|=1$, then it is easy to see that $\mathcal{N}(G)\cong K_{n}$ or $\mathcal{N}(G)\cong K_{n}-xy$ depending on $|[X,Y]|\geq2$ or $|[X,Y]|=1$, respectively. In both cases, the desired equality holds. Suppose that $x,x'\in X$ are two distinct vertices. If $N_{G}(x)\cap N_{G}(x')\neq \emptyset$, then $\mathcal{N}(G)=K_{n}$. So, we assume that $x$ and $x'$ have no common neighbor in $G$. In such a situation, we conclude that
\begin{equation*}
\mathcal{N}(G)\cong\left \{
\begin{array}{lll}
K_{n}-xx' & \mbox{if both $x$ and $x'$ have at least two neighbors in $Y$};\\
K_{n}-\{xx',x'y'\} & \mbox{if $x$ has at least two neighbors and $x'$ has a unique neighbor}\\
& \mbox{$y'$ in $Y$};\\
H & \mbox{if $x$ has at least two neighbors in $Y$ and $x'$ is not adjacent to}\\
& \mbox{any vertex in $Y$, where $H$ is isomorphic to the graph obtained}\\
& \mbox{from $K_{n-1}$ by joining a new vertex $x'$ to $\deg_{G}(x)-1$ vertices}\\
& \mbox{of $y$};\\
(K_{n-1}-xy)+x'y & \mbox{if $x$ has a unique neighbor $y\in Y$ and $x'$ does not have any}\\
& \mbox{neighbor in $Y$;}\\
F & \mbox{if $x$ and $x'$ have unique neighbors $y$ and $y'$ in $Y$, respectively,}\\
& \mbox{where $F$ is obtained from $K_{n-2}$ on the vertices in $Y$ by joining}\\
& \mbox{new vertices $x$ and $x'$ to all vertices in $Y\setminus\{y\}$ and $Y\setminus\{y'\}$,}\\
& \mbox{respectively;}\\
K_{n-2}+\{x,y\} & \mbox{otherwise}.
\end{array}
\right.
\end{equation*}
In all cases above, we have $\chi_{i}(G)=\chi\big{(}\mathcal{N}(G)\big{)}=\omega\big{(}\mathcal{N}(G)\big{)}$.\vspace{0.75mm}

\noindent
\textit{Subcase 2.2.} $|Y|\leq2$. It is a simple matter to see that
$$\mathcal{N}(G)\in\big{\{}2K_{1},K_{3},K_{2}+K_{1},K_{4},2K_{2},K_{4}-xy\big{\}},$$
in which $x$ and $y$ are two vertices of $K_{4}$. In each case, we get the desired equality. This completes the proof.
\end{proof}

In view of the usefulness of the two-step operation, it is natural to separately consider its structural properties, such as finding forbidden subgraphs in two-step graphs, or characterizing the graphs that can be realized as the two-step graph of some graph. For instance, with respect to the latter, we observe that a triangle-free graph $G$ with $\Delta(G)\ge 3$ is not the two-step graph of any graph.

\begin{remark}
If a connected triangle-free graph $G$ on at least two vertices is the two-step graph of some graph, then $G$ is an odd cycle.
\end{remark}

\begin{proof}
Let $G$ be a connected triangle-free graph of order $n\geq2$ different from an odd cycle. Suppose $G$ is isomorphic to $\mathcal{N}(H)$ for some (necessarily connected) graph $H$. If $H$ has maximum degree $2$, then $H$ is either a cycle or a path, whose two-step graph is either a cycle (when $H$ is an odd cycle) or is disconnected, which is not possible. On the other hand, if $H$ has a vertex $v$ of degree at least $3$, then any three neighbors of $v$ induce a triangle in $\mathcal{N}(H)$, a contradiction.
\end{proof}


\section{Perfect injectively colorable graphs}
\label{Sect:perfect}


A problem that connects coding theory to domination in graphs asks whether there exists a minimum dominating set $D$ in a graph $G$ such that every vertex of $G$ has in its closed neighborhood exactly one vertex of $D$. Such a dominating set is known under different names, such as an {\em efficient dominating set}~\cite{bbs} or an {\em independent perfect dominating set}~\cite{lee} or {\em a perfect code}. The problem of characterizing the graphs that admit such a dominating set goes back to the 1970s~\cite{biggs}, and it is still in general not resolved. Note that any such graph has its domination number equal to the packing number. Furthermore, partitions of the vertex of a graph into perfect codes have been considered as early as in the 1980's, where the main focus was on the class of hypercubes~\cite{phelps}. In relation with these studies and connecting them to our work, we present three types of partitions of graphs into open packings.

First, suppose that a graph $G$ admits a partition into $\opack(G)$-sets. Based on the  relationship~\eqref{open}, this implies that $G$ has an injective coloring in which every color class has the same number of vertices. For instance, if $G$ is a cycle $C_{4r}$, then $\opack(C_{4r})=2r$ and $\chi_{i}(C_{4r})=2$, and the desired partition holds for cycles $C_{4r}$. Other sporadic examples of graphs with this property are the hypercubes $Q_3$ and $Q_4$, the complete bipartite graphs $K_{t,t}$ and the Sierpi\'nski graph $S_4^2$. To formalize, a graph $G$ is a \emph{perfect injectively colorable graph} if it has an injective coloring in which every color class forms a $\opack(G)$-set. Note that such an injective coloring of $G$ is necessarily a $\chi_i(G)$-coloring. Our aim is to initiate the study of perfect injectively colorable graphs, and here we give some initial results. From this definition, and due to \eqref{open}, we readily deduce the following result.

\begin{proposition}
Let $G$ be a graph. Then $G$ is a perfect injectively colorable graph if and only if there is a $\chi(\mathcal{N}(G))$-coloring in which all color classes have the same cardinality equal to $\rho^o(G)$.
\end{proposition}

An application of the result above can be for instance while considering paths $P_n$, $n\ge 2$. As previously mentioned, $\mathcal{N}(P_n)$ is isomorphic to two disjoint paths $P_{\left\lfloor n/2\right\rfloor}$ and $P_{\left\lceil n/2\right\rceil}$. Hence, we deduce the following claims.
\begin{itemize}
  \item If $n=4p$ for some integer $p\ge 1$, then $\mathcal{N}(P_{4p})$ is formed by two disjoint paths $P_{2p}$. This means there exists a $\chi\big{(}\mathcal{N}(P_{4p})\big{)}$-coloring in which every color class has cardinality $2p$. Since $\rho^o(P_{4p})=2p$, it follows $P_{4p}$ is perfect injectively colorable.
  \item If $n=4p+1$ or $n=4p+3$ for some integer $p\ge 1$, then $\mathcal{N}(P_n)$ is formed by two disjoint paths $P_{\left\lfloor n/2\right\rfloor}$ and $P_{\left\lceil n/2\right\rceil}$ (of different orders, one even and one odd). We easily observe that there is a color class in any $\chi\big{(}\mathcal{N}(P_{n})\big{)}$-coloring that has cardinality less than $\rho^o(P_{n})$. Thus, $P_{n}$ is not perfect injectively colorable.
  \item If $n=4p+2$ for some integer $p\ge 1$, then $\mathcal{N}(P_n)$ is formed by two disjoint paths $P_{2p+1}$. Now there is a color class in any $\chi\big{(}\mathcal{N}(P_{n})\big{)}$-coloring that has cardinality less than $\rho^o(P_{n})$. Again, $P_{n}$ is not perfect injectively colorable.
\end{itemize}

We next turn our attention to graphs of diameter $2$. Note that $\opack(G)\le 2$ if $\diam(G)=2$.
If, in addition, $G$ is triangle-free, then a partition of $V(G)$ into $\opack(G)$-sets coincides with the existence of a perfect matching in $G$. We generalize this observation to a characterization of diameter $2$ perfect injectively colorable graphs.

\begin{proposition}\label{prp:PICdiam2}
If $G$ is a graph with $\diam(G)=2$, then $G$ is a perfect injectively colorable graph if and only if either each edge of $G$ lies in a triangle or there exists a perfect matching $M$ in $G$ such that each edge of $M$ does not lie in a triangle.
\end{proposition}
\begin{proof}
Let $G$ be a graph with $\diam(G)=2$. Firstly, if each of its edges lies in a triangle, then every two vertices have a common neighbor, so $\opack(G)=1$. We infer that $\chi_{i}(G)=|V(G)|$, and $G$ is a perfect injectively colorable graph. Secondly, suppose that $G$ has a perfect matching $M$ such that each edge of $M$ does not lie in a triangle. Note that $\{u,v\}$ is an open packing if $uv$ is an edge that does not lie in a triangle. Thus, $\opack(G)=2$, and the endvertices of any edge from $M$ form an $\opack(G)$-set. We readily infer that $G$ is a perfect injectively colorable graph.

Conversely, suppose that $G$ is a perfect injectively colorable graph (with $\diam(G)=2$). We may further assume that there is an edge $e$ that does not lie in a triangle. By the same argument as above, we infer that $\opack(G)=2$. Hence $V(G)$ can be partitioned into $\opack(G)$-sets of cardinality $2$. Since $\diam(G)=2$, each $\opack(G)$-set consists of two adjacent vertices. Thus, $G$ has a matching, which saturates all vertices of $G$, that is, a perfect matching.
\end{proof}

A weaker form of perfectness is obtained when one requires that each open packing in a partition given by an injective coloring is maximal, though not necessarily maximum. (An open packing $P$ is {\em maximal} if after adding any new vertex to $P$ the resulting set is not an open packing.)
 An analogous concept for standard graph coloring is already established. Namely, a {\em fall coloring} of a graph $G$, as introduced by Dunbar et al.~\cite{DUNBAR}, is a partition of $V(G)$ into maximal independent sets. The concept is also known as {\em idomatic partition}~\cite{km,VALENCIA}. Note that a graph $G$ need not have a fall coloring, and if it does, the minimum number of colors required for such a coloring is the {\em fall chromatic number} of $G$.  A naturally interesting case is when the fall chromatic number equals the chromatic number of a graph. Among variois studies of fall colorings, we mention two recent papers considering their complexity issues~\cite{CAMPOS, LAURI}

 Taking this terminology into account, call a partition of the vertex set of a graph $G$ into maximal open packings an {\em injective fall coloring}. If a graph $G$ admits such a partition, then $G$ is an {\em injectively fall colorable graph}. If, in addition, $G$ admits an injective fall coloring with $\chi_i(G)$ colors, then $G$ is an {\em injectively fall $\chi_i(G)$-colorable graph}. Clearly, every perfect injectively colorable graph is an injectively fall $\chi_i(G)$-colorable graph, but the converse is not true; take the graph house as a small example. Also, every injectively fall $\chi_i(G)$-colorable graph is an injectively fall colorable graph. While we do not know if the converse is also true, we prove that these two classes coincide within diameter $2$ graphs.

\begin{proposition}\label{prp:IFCdiam2}
Let $G$ be a graph with $\diam(G)=2$. The following statements are equivalent:
\begin{enumerate}[(i)]
\item $G$ is an injectively fall $\chi_i(G)$-colorable graph;
\item $G$ is an injectively fall colorable graph;
\item there exists a matching $M$ in $G$ such that each edge of $M$ does not belong to a triangle, and every edge incident with a vertex not in $V(M)$ lies in a triangle.
\end{enumerate}
\end{proposition}
\begin{proof}
The direction (i)$\implies$(ii) is trivial. To see (ii)$\implies$(iii), let $G$ be an injectively fall colorable graph, and let $\mathbb{P}=\{P_{1},\dots,P_{|\mathbb{P}|}\}$ be the color partition of an injective fall coloring of $G$. Since $G$ has diameter $2$, each $P_i$ either consists of a vertex or of two adjacent vertices. If $P_i$ consist of two adjacent vertices $u$ and $v$, then, since $P_i$ is an open packing, the edge $uv$ does not lie in a triangle. Thus the edges that correspond to sets $P_i$ with $|P_i|=2$ form a matching $M$ in $G$ as stated by condition (iii). On the other hand, if $|P_i|=1$ with $P_i=\{u\}$, then $P_i$ being maximal implies that any edge $uv$ must be contained in a triangle for otherwise $\{u,v\}$ would be an open packing containing $P_i$. Thus (iii) is proved.

To prove (iii)$\implies$ (i), first note that the partition of $V(G)$, $\mathbb{P}=\{\{u,v\}:\, uv\in M\}\cup\{\{w\}:\, w\textrm{ is not incident with an edge of } M\}$ yields an injective coloring of $G$. Indeed, since edges of $M$ do not lie in a triangle, their end-vertices form an open packing, while the singletons always form an open packing. Thus, $\mathbb{P}$ is an open packing partition of $G$. Since diam$(G)=2$, each $\{u,v\}\in \mathbb{P}$ is clearly a maximal open packing in $G$. Let $\{w\}\in \mathbb{P}$. If $\{w\}$ is not a maximal open packing, then there exists an open packing $\{w,z\}$ in $G$. Moreover, $wz\in E(G)$ as diam$(G)=2$. Now, $wz$ lies in a triangle by the hypothesis, which contradicts the fact that $\{w,z\}$ is an open packing. Consequently, $G$ is injectively fall colorable. To see that $|\mathbb{P}|=\chi_i(G)$, note that every vertex, which is not incident with $M$, receives a unique color in any injective coloring of $G$, since it is adjacent to all other vertices in ${\cal N}(G)$. Hence, $\chi_i(G)\ge |V(G)\setminus V(M)|+|V(M)|/2=|V(G)|-|V(M)|+|V(M)|/2=|\mathbb{P}|$. Thus $G$ is an injectively fall $\chi_i(G)$-colorable graph.
\end{proof}

We continue with analyzing the case of even cycles with respect to containment in one of the three classes of graphs that we study in this section. To do so, we recall that the \textit{lower open packing number} of $G$, denoted $\rho_{L}^{o}(G)$, is the minimum cardinality of a maximal open packing of $G$. We make use of the next lemma. Moreover, w observe that $n\equiv2$ (mod $4$) and $n\equiv0$ (mod $3$) is equivalent to $n\equiv6$ (mod $12$).

\begin{lemma}\emph{(\cite{hs})}\label{bbi}
Let $n\geq3$. Then $\rho_{L}^{o}(C_{n})=\lceil\frac{n}{3}\rceil+1$ if $n\equiv2$ \emph{(}mod $6$\emph{)}, and $\rho_{L}^{o}(C_{n})=\lceil\frac{n}{3}\rceil$ otherwise.
\end{lemma}

\begin{proposition}\label{prp:PICcycles}
Let $C_n$ be an even cycle. Then, it is injectively fall colorable if and only if either $n\equiv0\pmod 4$ or $n\equiv6$ \emph{(mod $12$)}. In addition, $C_n$ is perfect injectively colorable if and only if $n\equiv 0\pmod 4$ or $n=6$. Also, it is injectively fall $\chi_i(C_{n})$-colorable if and only if $n\equiv0\pmod 4$, or $n\equiv6$ \emph{(mod $12$)}.
\end{proposition}

\begin{proof}
We start the proof by noting that $\opack(C_n)=2\lfloor \frac{n}{4}\rfloor$ for any even integer $n$, and $\chi_i(C_n)=2$ if $n\equiv 0 \pmod 4$, while $\chi_i(C_n)=3$ if $n\equiv 2 \pmod 4$.

Let $n\equiv 0 \pmod 4$. Note that the pattern ``$1122\ldots$'' repeated along the vertices of the cycle yields an injective $2$-coloring of $C_{n}$, in which each color class has cardinality $\frac{n}{2}=\opack(C_n)$. Thus $C_n$ is perfect injectively colorable (in particular, it is both injectively fall $\chi_i(C_{n})$-colorable and injectively fall colorable) in this case.

From now on, let $n\equiv 2 \pmod 4$. First observe that $C_6$ is a perfect injectively colorable graph. Next, we write $n=4q+2$ for some integer $q\geq 2$. Since $\opack(C_n)=2\lfloor \frac{n}{4}\rfloor=2q$ and $\chi_i(C_n)=3$, the equality $\opack(C_n)\chi_{i}(C_n)=n$ does not hold, which implies that $C_n$ is not perfect injectively colorable graph.

Now, let $n\equiv6$ (mod $12$). Then the pattern ``$123\ldots$'' repeated along the vertices of the cycle yields an injective $\chi_i(C_{n})$-coloring of $C_n$, in which each color class is a maximal open packing. Thus, $C_n$ is is both injectively fall $\chi_i(C_{n})$-colorable and injectively fall colorable, as claimed.

It remains to consider the case when $n$ is not congruent to $6$ modulo $12$ \big{(}and $n\equiv2$ (mod $4$)\big{)}. We claim that in this case, regardless of how an injective $k$-coloring ($k\ge 3$) of $C_n$ is constructed, at least one of the color classes will not be a maximal open packing of $C_n$. Let $f$ be an injective $k$-coloring function of $C_{n}$ for which any color class is maximal. Let $X_{i}$ be the set of vertices with color $i$ under $f$. Since $n$ is not congruent to $6$ modulo $12$ and $n\equiv2$ (mod $4$), it follows that $n=12q+2$ or $n=12q'+10$ for some integers $q,q'\geq1$. If $n=12q+2$, then $\rho_{L}^{o}(C_{n})=4q+2$ by Lemma \ref{bbi}. So, $12q+2=|X_{1}|+\cdots+|X_{k}|\geq k(4q+2)\geq3(4q+2)$, a contradiction. Moreover, $\rho_{L}^{o}(C_{n})=4q'+4$ if $n=12q'+10$. So, $12q'+4=|X_{1}|+\cdots+|X_{k}|\geq k(4q'+4)\geq3(4q'+4)$, which is again a contradiction. Therefore, $C_{n}$ is not injectively fall colorable. This also implies that $C_{n}$ is not injectively fall $\chi_{i}(C_{n})$-colorable. This completes the proof.
\end{proof}

In the rest of this section, we focus on the family of hypercubes $\{Q_n\}_{n\in\mathbb{N}}$, where $Q_1=K_2$, and $Q_n=Q_{n-1}\cart K_2$ for $n\ge 2$.  Recall that a {\em perfect code} (also called a 1-perfect code) in a graph $G$ is a set which is at the same time a packing and a dominating set. A  {\em code-coloring} of $G$ is a partition of $V(G)$ into perfect codes. Clearly, not all graphs admit a perfect code let alone a code-coloring, where for a simple example one can take $Q_2=C_4$. It is also easy to see that any two antipodal vertices of $Q_3$ form a perfect code, and thus one can find a code-coloring in the $3$-cube. Note that each color class of a code-coloring of a graph $G$ presents a maximum packing, that is, a $\rho(G)$-set of $G$.

First, we give a general observation about the specific construction of the partition into $\opack(G)$-sets of prisms, which is based on the existence of code-colorings.

\begin{lemma}
\label{lem:prism}
If a graph $G$ has a code-coloring, then $G\cart K_2$ is a perfect injectively colorable graph.
\end{lemma}
\begin{proof}
Let $G$ be a graph and $\mathbb{P}=\{P_{1},\dots,P_{|\mathbb{P}|}\}$ a code-coloring of $G$. By definition, for any $i$, the set $P_i$ is a maximum packing and a dominating set in $G$. Clearly, $P_i\times V(K_2)$ is an open packing and at the same time a total dominating set in $G\cart K_2$. We infer that $P_i\times V(K_2)$ is an $\opack$-set of $G\cart K_2$. Consequently, $\mathbb{Q}=\{P_{1}\times V(K_2),\dots,P_{|\mathbb{P}|}\times V(K_2)\}$ is a partition into $\opack$-sets of $G \cart K_2$.
\end{proof}

Mollard~\cite{MOL} considered code-colorings of regular graphs, and proved the following result.

\begin{theorem}
\label{thm:mollard}
 {\rm \cite[Theorem 7]{MOL}}
Let $G$ and $H$ be regular graphs of the same degree $n$. If $H$ is bipartite and there exists a code-coloring in $G$ and in $H$, then there exists a code-coloring in $G\cart H\cart K_2$.
\end{theorem}

By inductively applying Theorem~\ref{thm:mollard}, we get that $Q_{2^k-1}$ has a code-coloring for every $k\in \mathbb{N}$, and combining this with Lemma~\ref{lem:prism} we infer the following result.

\begin{theorem}\label{thm:hypercubes}
If $k\in \mathbb{N}$, then $Q_{2^k}$ is a perfect injectively colorable graph.
\end{theorem}

The problem of characterizing the hypercubes, which are perfect injectively colorable or injectively fall colorable, is still open. Beside Theorem~\ref{thm:hypercubes}, we have a few additional sporadic examples. Clearly, $Q_3$ is a perfect injectively colorable graph, and one can also check that $Q_5$ is as well. We suspect that this happens also with $Q_6$, since the set
$$S=\{000000,000001,001110,001111,110110,110111,111000,111001\},$$
given in a binary representation of $Q_6$ is a $\rho^o(Q_6)$-set of cardinality $8$, which leads to a partition of $V(Q_6)$ into $\opack(Q_6)$-sets. These facts bring the following question.

\begin{problem}
For which positive integers $n$, is the hypercube $Q_n$ a perfect injectively colorable graph \emph{(}an injectively fall colorable graph, respectively\emph{)}?
\end{problem}

Clearly, an answer to the question above passes first through computing the value of the open packing number of hypercubes. Hence, the following question is also worthwhile.

\begin{problem}
Which is the value of $\rho^o(Q_n)$ for any $n\ge 6$?
\end{problem}

We conclude this section with some other natural open problems.
Note that Propositions~\ref{prp:PICdiam2} and~\ref{prp:IFCdiam2} provide characterizations of the three classes among diameter $2$ graphs, that is, the graphs $G$ with $\opack(G)\le 2$. The next small step in characterizing the three classes of graphs is presented in the following problem.

\begin{problem}
Characterize the perfect injectively colorable graphs, the injectively $\chi_i(G)$-fall colorable graphs, and the injectively fall colorable graphs $G$, respectively, among graphs $G$ with $\opack(G)=3$.
\end{problem}

As noted in Proposition~\ref{prp:IFCdiam2}, the classes of injectively $\chi_i(G)$-fall colorable graphs and injectively fall colorable graphs coincide in graphs $G$ with diameter $2$. Actually, we do not know if the two classes differ, though we suspect they do. Anyway, we pose it as the following question.

\begin{problem}
Does there exist a graph $G$, which is injectively fall colorable, but not injectively $\chi_i(G)$-fall colorable?
\end{problem}


\section*{Acknowledgments}

B.B. was supported by the Slovenian Research Agency (ARRS) under the grants P1-0297, J1-2452 and J1-3002.
I.G.Y. has been partially supported by ``Junta de Andaluc\'ia'', FEDER-UPO Research and Development Call, reference number UPO-1263769. Moreover, this investigation was completed while this last author was visiting the University of Ljubljana, Slovenia, supported by ``Ministerio de Educaci\'on, Cultura y Deporte'', Spain, under the ``Jos\'e Castillejo'' program for young researchers (reference number: CAS21/00100).


\end{document}